\documentclass[11pt]{amsart}


\usepackage{amscd,amssymb,bm,enumitem,mleftright,pgf,tikz,xcolor,hyperref}
\usepackage[margin = 1in]{geometry}


\setlist[itemize]{leftmargin = *}
\setlist[enumerate]{leftmargin = *}

\allowdisplaybreaks

\frenchspacing


\newtheorem{Cor}{Corollary}[section]

\newtheorem{Thm}[Cor]{Theorem}
\newtheorem{Prop}[Cor]{Proposition}
\newtheorem{rmk}[Cor]{Remark}
\theoremstyle{definition}
\newtheorem{Def}[Cor]{Definition}

\theoremstyle{remark}
\newtheorem{Eg}[Cor]{Example}



\newcommand{\C}{\mathbb{C}}

\newcommand{\N}{\mathbb{N}}
\newcommand{\R}{\mathbb{R}}
\newcommand{\T}{\mathbb{T}}
\newcommand{\X}{\mathsf{X}}
\newcommand{\Y}{\mathsf{Y}}
\newcommand{\Z}{\mathbb{Z}}
\newcommand{\Br}[1]{\mleft( #1 \mright)}
\newcommand{\Cc}[1]{\func{C_{c}}{#1}}

\newcommand{\df}{\stackrel{\textnormal{df}}{=}}
\newcommand{\ds}{\displaystyle}
\newcommand{\Id}{\operatorname{Id}}
\newcommand{\Lp}[2]{\func{L^{#1}}{#2}}

\newcommand{\Act}[3]{\func{#1_{#2}}{#3}}
\newcommand{\Adj}[1]{\func{\mathcal{L}}{#1}}
\newcommand{\Aut}[1]{\func{\operatorname{Aut}}{#1}}

\newcommand{\Int}[3]{\int_{#1} #2 ~ \mathrm{d}{#3}}
\newcommand{\Map}[4]{\mleft\{ \begin{matrix} #1 & \to & #2 \\ \ds #3 & \mapsto & \ds #4 \end{matrix} \mright\}}
\newcommand{\red}{\operatorname{red}}
\newcommand{\Seq}[2]{\Br{#1}_{#2}}

\newcommand{\Comp}[1]{\func{\mathcal{K}}{#1}}

\newcommand{\func}[2]{#1 \Br{#2}}
\newcommand{\Func}[2]{\func{\Br{#1}}{#2}}
\newcommand{\FUNC}[2]{\func{\SqBr{#1}}{#2}}
\newcommand{\Mult}[2]{\mathsf{M}_{#1,#2}}
\newcommand{\MULT}[2]{\overline{\mathsf{M}}_{#1,#2}}
\newcommand{\Norm}[2]{\mleft\| #1 \mright\|_{#2}}
\newcommand{\Pair}[2]{\Br{#1,#2}}
\newcommand{\Span}[1]{\func{\operatorname{Span}}{#1}}
\newcommand{\SqBr}[1]{\mleft[ #1 \mright]}
\newcommand{\SSet}[1]{\mleft\{ #1 \mright\}}
\newcommand{\Supp}[1]{\func{\operatorname{Supp}}{#1}}
\newcommand{\Thet}[3]{\Theta^{#1}_{#2,#3}}
\newcommand{\Toep}[1]{\func{\mathcal{T}}{#1}}
\newcommand{\Cstar}{C^{\ast}}
\newcommand{\Inner}[3]{\mleft\langle #1 \middle| #2 \mright\rangle_{#3}}
\newcommand{\InvIm}[2]{#1^{- 1} \SqBr{#2}}
\newcommand{\Range}[1]{\func{\operatorname{Range}}{#1}}

\renewcommand{\O}[1]{\func{\mathcal{O}}{#1}}
\renewcommand{\Im}[2]{#1 \SqBr{#2}}


\title{Group Actions on Product Systems}

\author{Valentin Deaconu}
\address{Valentin Deaconu \\ Department of Mathematics \& Statistics\\ University of Nevada \\ Reno, NV 89557-0084 \\ USA}
\email{vdeaconu@unr.edu}

\author{Leonard Huang}
\address{Leonard Huang \\ Department of Mathematics \& Statistics\\ University of Nevada \\ Reno, NV 89557-0084 \\ USA}
\email{LeonardHuang@unr.edu}



\keywords{$ \Cstar $-correspondence; product system; group action; Cuntz-Pimsner algebra; $ K $-theory.}

\subjclass{Primary 46L05}

\date{December 2022}


\begin{document}



\begin{abstract}
We introduce the concept of  crossed product of a product system by a locally compact group. We prove that the crossed product of a row-finite and faithful product system by an amenable group is also a row-finite and faithful product system. We illustrate with examples related to group actions on $k$-graphs and to higher rank Doplicher-Roberts algebras.

\end{abstract}


\maketitle



\section{Introduction}


Product systems over various discrete semigroups were introduced by N. Fowler in \cite{F}, inspired by work of W. Arveson and studied by several authors (see \cite{AM,CLSV,SY}, for example). Several interesting examples of product systems already occur over the semigroup $ \Pair{\N^{k}}{+} $, where $ k \geq 2 $.

We first recall the Toeplitz algebra and the Cuntz-Pimsner algebra of a product system. We use the covariance condition in Fowler's sense. Next, we introduce the concept of an action of a (locally compact and Hausdorff) group on a product system and then define the associated crossed product product system. We prove that the crossed product of a row-finite and faithful product system by an amenable group is also row-finite and faithful, and, furthermore, we establish a version of the Hao-Ng Theorem for product systems in our setting.

Motivations come from two sources: (i) group actions on higher-rank graphs; (ii) the higher rank Doplicher-Roberts algebra defined from $ k $ representations of a compact group.
\section{$ \Cstar $-Algebras of Product Systems}


Let us first recall the definition of a product system. Let $ \Pair{P}{\cdot} $ be a discrete monoid with identity $ e $, and let $ A $ be a $ \Cstar $-algebra. A $ P $-indexed \emph{product system} of $ \Cstar $-correspondences over $ A $ is a semigroup $ \ds \Y = \bigsqcup_{p \in P} \Y_{p} $ with the following properties:
\begin{itemize}
\item
For each $ p \in P $, the object $ \Y_{p} $ is a $ C^*$-correspondence over $ A$, which we call the \emph{fiber} of $ \Y $ at $ p $. Its inner product is denoted by $\Inner{\cdot}{\cdot}{\Y_p}$.

\item
The fiber $ \Y_{e} $ of $ \Y $ at $ e $ is $ _{A} A_{A} $, which is $ A $ viewed as an $ A $-correspondence over itself.

\item
For each $ p,q \in P $, the semigroup multiplication on $ \Y $ maps  $ \Y_{p} \times \Y_{q} $ to $ \Y_{p q} $, so we have an $ A $-balanced $ \C $-bilinear map
$$
\Mult{p}{q} \df \Map{\Y_{p} \times \Y_{q}}{\Y_{p q}}{\Pair{x}{y}}{x \cdot y}.
$$

\item
For each $ p,q \in P \setminus \SSet{e} $, the map $ \Mult{p}{q}: \Y_{p} \times \Y_{q} \to \Y_{p q} $ induces an isomorphism $ \MULT{p}{q}: \Y_{p} \otimes_{A} \Y_{q} \to \Y_{p q} $.

\item
For each $ p \in P $, the maps $ \Mult{e}{p} $ and $ \Mult{p}{e} $ implement, respectively, the left and right actions of $ A $ on $ \Y_{p} $. Consequently, $ \MULT{p}{e}: \Y_{p} \otimes_{A} \Br{_{A} A_{A}} \to \Y_{p} $ is an isomorphism for all $ p \in P $.
\end{itemize}

For each $ p \in P $, let $ \phi_{p}: A \to \Adj{\Y_{p}} $ denote the left action of $ A $ on $ \Y_{p} $ by adjointable operators. We say that $ \Y $ is \emph{essential} if and only if $ \Y_{p} $ is an essential $ A $-correspondence, i.e., $ \Span{\Im{\Im{\phi_{p}}{A}}{\Y_{p}}} $ is dense in $ \Y_{p} $, for each $ p \in P $, in which case $ \MULT{e}{p}: \Br{_{A} A_{A}} \otimes_{A} \Y_{p} \to \Y_{p} $ is also an isomorphism.

If $ \phi_{p} $ takes values in the $ C^{\ast} $-algebra $ \Comp{\Y_{p}} $ of compact operators on $ \Y_{p} $ for each $ p \in P $, then $ \Y $ is said to be \emph{row-finite} or \emph{proper}, and if $ \phi_{p} $ is furthermore injective for each $ p \in P $, then $ \Y $ is said to be \emph{faithful}.

There are various $ \Cstar $-algebras associated to a product system under certain assumptions. For our future reference, let us recall some standard facts. 

Let $ \Y $ be a $ P $-indexed product system over  $ A $, and let $ B $ be a $ \Cstar $-algebra. A map $ \psi: \Y \to B $ is then called a \emph{Toeplitz representation} of $ \Y $ if and only if, writing $ \psi_{p} $ for $ \psi|_{\Y_{p}} $, the following properties hold:
\begin{itemize}
\item
$ \psi_{p}: \Y_{p} \to B $ is $ \C $-linear for all $ p \in P $.

\item
$ \psi_{e}: A \to B $ is a $ \Cstar $-homomorphism, and $ \func{\psi_{e}}{\Inner{\zeta}{\eta}{\Y_{p}}} = \func{\psi_{p}}{\zeta}^{\ast} \func{\psi_{p}}{\eta} $ for all $ p \in P $ and $ \zeta,\eta \in \Y_{p} $.

\item
$ \func{\psi_{p}}{\zeta} \func{\psi_{q}}{\eta} = \func{\psi_{p q}}{\zeta \eta} $ for all $ p,q \in P $, $ \zeta \in \Y_{p} $, and $ \eta \in \Y_{q} $.
\end{itemize}
One can construct a $ C^{\ast} $-algebra $ \Toep{\Y} $ --- known as the \emph{Toeplitz algebra} of $ \Y $ --- and a Toeplitz representation $ i_{\Y}: \Y \to \Toep{\Y} $ of $ \Y $ such that the pair $ \Pair{\Toep{\Y}}{i_{\Y}} $ is universal in the following sense: $ \Toep{\Y} $ is generated by $ \Im{i_{\Y}}{\Y} $, and for any Toeplitz representation $ \psi: \Y \to B $, there is a $ \Cstar $-homomorphism $ \psi_{\ast}: \Toep{\Y} \to B $ such that $ \psi_{\ast} \circ i_{\Y} = \psi $.

For each $ p \in P $, there exists a $ \Cstar $-homomorphism $ \psi^{\Br{p}}: \Comp{\Y_{p}} \to B $ obtained as the continuous extension of the map
$$
\forall \zeta_{1},\ldots,\zeta_{n},\eta_{1},\ldots,\eta_{n} \in \Y_{p}: \qquad
\sum_{i = 1}^{n} \Thet{}{\zeta_{i}}{\eta_{i}} \mapsto \sum_{i = 1}^{n} \func{\psi_{p}}{\zeta_{i}} \func{\psi_{p}}{\eta_{i}}^{\ast}.
$$
Here $ \Thet{}{\zeta}{\eta}(\xi)=\zeta\Inner{\eta}{\xi}{\Y_p}$. Note that $ \psi^{\Br{e}} = \psi_{e} $. 

A Toeplitz representation $ \psi: \Y \to B $ is then called \emph{Cuntz-Pimsner covariant} (in Fowler's sense) if and only if
$$
\forall p \in P, ~ \forall a \in \InvIm{\phi_{p}}{\Comp{\Y_{p}}}: \qquad
\func{\psi^{\Br{p}}}{\func{\phi_{p}}{a}} = \func{\psi_{e}}{a}.
$$
One can construct a $ C^{\ast} $-algebra $ \O{\Y} $ --- known as the \emph{Cuntz-Pimsner algebra} of $ \Y $ --- and a Cuntz-Pimsner covariant Toeplitz representation $ j_{\Y}: \Y \to \O{\Y} $ of $ \Y $ such that the pair $ \Pair{\O{\Y}}{j_{\Y}} $ is universal in the following sense: $ \O{\Y} $ is generated by $ \Im{j_{\Y}}{\Y} $, and for any Cuntz-Pimsner covariant Toeplitz representation $ \psi: \Y \to B $, there is a $ C^{\ast} $-homomorphism $ \psi_{\ast}: \O{\Y} \to B $ such that $ \psi_{\ast} \circ j_{\Y} = \psi $.

\begin{Eg}\label{Fow}
For  a   product system $\Y\to P$ with fibers $\Y_p$  nonzero finitely dimensional Hilbert spaces, in particular $A=\Y_e=\C$, let us fix an orthonormal basis $\mathcal B_p$ in $\Y_p$. Then a Toeplitz representation $\psi:\Y\to B$ gives rise to a family of isometries $\{\psi(\xi): \xi\in \mathcal B_p\}_{p\in P}$ with mutually orthogonal range projections. In this case $\Toep{\Y}$ is generated by a colection of Cuntz-Toeplitz algebras which interact according to the multiplication maps $\MULT{p}{q}$  in $\Y$.

A representation $\psi:\Y\to B$ is  Cuntz-Pimsner covariant  if \[ \sum_{\xi\in \mathcal B_p}\psi(\xi)\psi(\xi)^*=\psi(1)\] for all $p\in P$. 
The Cuntz-Pimsner algebra $\O{\Y}$ is generated by a collection of Cuntz algebras, so it could be thought as a multidimensional Cuntz algebra.  N. Fowler proved in \cite{F1} that if the  function $p\mapsto \dim \Y_p$ is injective, then the algebra $\O{\Y}$ is simple and purely infinite. For other examples of multidimensional Cuntz algebras, see  \cite{ B}.   
\end{Eg}
\begin{Eg}\label{k-graph}
A row-finite  $k$-graph with no sources $\Lambda$ (see \cite{KP}) determines a product system $\Y\to \N^k$ with $\Y_{0}=A=C_0(\Lambda^0)$ and $\Y_n=\overline{C_c(\Lambda^n)}$ for $n\neq 0$ such that we have an isomorphism $\O{\Y}\cong C^*(\Lambda)$. 

\end{Eg}



\section{Group Actions on Product Systems and Crossed Products}


Given a locally compact group $ G $ and an $C^*$-correspondence $ \X $ over $A$, recall that an action of $ G $ on $ \X $ is a pair $ \Pair{\alpha}{\beta} $ with the following properties:
\begin{itemize}
\item
$ \alpha $ is a strongly continuous action of $ G $ on $ A $ by $ \Cstar $-automorphisms.

\item
$ \beta $ is a strongly continuous action of $ G $ on $ \X $ by surjective $ \C $-linear isometries.

\item
For all $ s \in G $, $ a \in A $, and $ x,y \in \X $,
$$
\Inner{\Act{\beta}{s}{x}}{\Act{\beta}{s}{y}}{\X} = \Act{\alpha}{s}{\Inner{x}{y}{\X}},    \qquad
\Act{\beta}{s}{x a}                              = \Act{\beta}{s}{x} \Act{\alpha}{s}{a}, \qquad
\Act{\beta}{s}{a x}                              = \Act{\alpha}{s}{a} \Act{\beta}{s}{x}.
$$
\end{itemize}

By the universal property of Cuntz-Pimsner algebras defined using the Katsura ideal, see \cite{K}, there is an action $ \gamma $ of $ G $ on $ \O{\X} $.

The crossed product $ \X \rtimes_{\beta} G $ of $ \X $ by $ G $ is defined as the completion of the $ \Cc{G,A} $-bimodule $ \Cc{G,\X} $, and its $ \Br{A \rtimes_{\alpha} G} $-correspondence structure is uniquely determined by the following operations:
\begin{gather*}
\forall f \in \Cc{G,A}, ~ \forall \zeta,\eta \in \Cc{G,\X}, ~ \forall s \in G: \\
\Func{f \zeta}{s} = \Int{G}{\func{f}{t} \Act{\beta}{t}{\func{\zeta}{t^{- 1} s}}}{t}, \quad
\Func{\zeta f}{s} = \Int{G}{\func{\zeta}{t} \Act{\alpha}{t}{\func{f}{t^{- 1} s}}}{t}, \\
\func{\Inner{\zeta}{\eta}{\X \rtimes_{\beta} G}}{s} = \Int{G}{\Act{\alpha}{t^{- 1}}{\Inner{\func{\zeta}{t}}{\func{\eta}{t s}}{\X}}}{t}.
\end{gather*}
For $ G $ amenable, it is proven in \cite{HN} that
$$
\O{\X} \rtimes_{\gamma} G \cong \O{\X \rtimes_{\beta} G}.
$$



\begin{Def}
An action $ \beta $ of a locally compact group $ G $ on a product system $ \Y \to P $ over $A$ is a $ P $-indexed family $ \Seq{\beta^{p}}{p \in P} $ such that $ \Pair{\beta^{e}}{\beta^{p}} $ is an action of $ G $ on $ \Y_{p} $ for each $ p \in P $, and furthermore,
$$
\forall s \in G, ~ \forall \zeta \in \Y_{p}, ~ \forall \eta \in \Y_{q}: \qquad
\Act{\beta^{p q}}{s}{\zeta \eta} = \Act{\beta^{p}}{s}{\zeta} \Act{\beta^{q}}{s}{\eta}.
$$
We will usually denote $ \beta^{e} $ by $ \alpha $.
\end{Def}



\begin{Eg}
For an essential product system $ \Y $ indexed by  $ P = \Pair{\N^{k}}{+} $ such that $ \phi_{p} $ is an injection into $ \Comp{\Y_{p}} $ for all $ p \in \N^{k} $, universality allows us to define a strongly continuous gauge action $ \gamma: \T^{k} \to \Aut{\O{\Y}} $ such that
$$
\forall z \in \T^{k}, ~ \forall p \in \N^{k}, ~ \forall a \in A, ~ \forall \zeta \in \Y_{p}: \qquad
\Act{\gamma}{z}{a} = a
\qquad \text{and} \qquad
\Act{\gamma}{z}{\func{j_{\Y}}{\zeta}} = z^{p} \func{j_{\Y}}{\zeta}.
$$
Here, $ \ds z^{p} \df \prod_{i = 1}^{k} z_{i}^{p_{i}} $. Then the fixed-point algebra $ \O{\Y}^{\gamma} $ is $ \Cstar $-isomorphic to the inductive limit
$$
\varinjlim_{p \in \N^{k}} \Comp{\Y_{p}},
$$
where the order relation on $ \N^{k} $ is the coordinatewise order.
\end{Eg}

\begin{Eg}
For a compact group $G$ and $k$ finite dimensional unitary representations $\rho_i$ of $G$ on Hilbert spaces $\mathcal H_i$ for $i=1,...,k$, we can construct  a product system $\Y$ with fibers 
\[\Y_n=\mathcal H_1^{\otimes n_1}\otimes\cdots\otimes  \mathcal H_k^{\otimes n_k}\] 
for $n=(n_1,...,n_k)\in \N^k$, see \cite{D}. Then the group $G$ acts on each fiber $\Y_n$  via the representation $\rho^n=\rho_1^{\otimes n_1}\otimes\cdots\otimes \rho_k^{\otimes n_k}$. This action is compatible with the multiplication maps and commutes with the gauge action of $\T^k$.
\end{Eg}



\begin{Def} \label{cp}
If $ \beta $ is an action of $ G $ on a $ P $-indexed product system $ \Y $, then the \emph{crossed product} $ \Y \rtimes_{\beta} G $ is defined as the $ P $-indexed product system with fibers $ \Y_{p} \rtimes_{\beta^{p}} G $, which are $C^*$-correspondences over $ A \rtimes_{\alpha} G $. For $ \zeta \in \Cc{G,\Y_{p}} $ and $ \eta \in \Cc{G,\Y_{q}} $, the product $ \zeta \eta \in \Cc{G,\Y_{p q}} $ is given by
$$
\forall s \in G: \qquad
\Func{\zeta \eta}{s} = \Int{G}{\func{\zeta}{t} \Act{\beta^{q}}{t}{\func{\eta}{t^{- 1} s}}}{t}.
$$
\end{Def}



\begin{Prop}
The semigroup $ \ds \Y \rtimes_{\beta} G = \bigsqcup_{p \in P} \Br{\Y_{p} \rtimes_{\beta^{p}} G} $ with the above multiplication law  satisfies all the properties of a product system over $ A \rtimes_{\alpha} G $.
\end{Prop}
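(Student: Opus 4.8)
The plan is to verify the product-system axioms fiber-pair by fiber-pair, using as a black box the fact recalled above that the crossed product of a single $ C^{\ast} $-correspondence by a group action is again a $ C^{\ast} $-correspondence. First I would record the two cheap axioms: each fiber $ \Y_{p} \rtimes_{\beta^{p}} G $ is by construction a $ C^{\ast} $-correspondence over $ A \rtimes_{\alpha} G $, and the fiber at $ e $ is $ \Y_{e} \rtimes_{\alpha} G = A \rtimes_{\alpha} G $, regarded as a correspondence over itself. Next I would check that the multiplication of Definition~\ref{cp} carries $ \Cc{G,\Y_{p}} \times \Cc{G,\Y_{q}} $ into $ \Cc{G,\Y_{p q}} $, is $ \C $-bilinear, and is balanced over $ A \rtimes_{\alpha} G $; these reduce to Fubini computations using the covariance relation $ \Act{\beta^{p q}}{s}{\zeta \eta} = \Act{\beta^{p}}{s}{\zeta} \Act{\beta^{q}}{s}{\eta} $ together with the module-action formulas recalled above, and the passage from $ \Cc{G,-} $ to the completions is controlled by the isometry property $ \Norm{\Act{\beta^{q}}{t}{\cdot}}{} = 1 $.

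The heart of the proof is the isomorphism axiom: for $ p,q \in P \setminus \SSet{e} $ the multiplication must induce an isomorphism of $ \Br{A \rtimes_{\alpha} G} $-correspondences
$$
\Br{\Y_{p} \rtimes_{\beta^{p}} G} \otimes_{A \rtimes_{\alpha} G} \Br{\Y_{q} \rtimes_{\beta^{q}} G} \cong \Y_{p q} \rtimes_{\beta^{p q}} G .
$$
My approach is to isolate the more transparent general statement that an action on $ C^{\ast} $-correspondences $ \X,\Z $ over $ A $ (with actions $ \alpha $ on $ A $ and $ \beta $ on $ \Z $) yields a natural isomorphism $ \Br{\X \rtimes G} \otimes_{A \rtimes G} \Br{\Z \rtimes G} \cong \Br{\X \otimes_{A} \Z} \rtimes G $, and then transport it along the product-system isomorphism $ \MULT{p}{q}: \Y_{p} \otimes_{A} \Y_{q} \to \Y_{p q} $. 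Concretely I would define, on elementary tensors of compactly supported functions,
$$
\func{\Phi\Br{\zeta \otimes \eta}}{s} = \Int{G}{\func{\zeta}{t} \otimes \Act{\beta}{t}{\func{\eta}{t^{- 1} s}}}{t} \in \X \otimes_{A} \Z .
$$
The multiplication of Definition~\ref{cp} is then exactly $ \MULT{p}{q} $ applied pointwise after $ \Phi $, so it suffices to verify that (i) $ \Phi $ descends to the balanced tensor product, (ii) $ \Phi $ preserves the $ \Br{A \rtimes_{\alpha} G} $-valued inner product, and (iii) $ \Phi $ has dense range.

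I expect step (ii) to be the main obstacle. One must expand the left-hand inner product $ \Inner{\zeta_{1} \otimes \eta_{1}}{\zeta_{2} \otimes \eta_{2}}{} $, which first passes through the interior-tensor-product formula $ \Inner{\eta_{1}}{\Inner{\zeta_{1}}{\zeta_{2}}{\X \rtimes G} \cdot \eta_{2}}{\Z \rtimes G} $, and then match it, after several applications of Fubini and the covariance identities $ \Inner{\Act{\beta}{t}{x}}{\Act{\beta}{t}{y}}{} = \Act{\alpha}{t}{\Inner{x}{y}{}} $ and $ \Act{\beta}{t}{x a} = \Act{\beta}{t}{x} \Act{\alpha}{t}{a} $, against the crossed-product inner-product formula applied to $ \Br{\X \otimes_{A} \Z} \rtimes G $. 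Keeping track of the left-translation conventions through these changes of variable is where the bookkeeping is delicate, and the same covariance identities are what make the balancing in (i) go through. Once isometry holds, density in (iii) follows because finite sums of functions $ s \mapsto \func{\chi}{s} \Br{x \otimes z} $ with $ \chi \in \Cc{G} $, $ x \in \X $, $ z \in \Z $ span a dense subspace of $ \Cc{G, \X \otimes_{A} \Z} $, and each such function is approximated by elements of the range of $ \Phi $. Finally I would dispatch the identity-fiber isomorphisms by comparing the multiplication formula directly with the left/right module-action formulas recalled above: $ \Mult{p}{e} $ and $ \Mult{e}{p} $ implement the right and left $ \Br{A \rtimes_{\alpha} G} $-actions on $ \Y_{p} \rtimes_{\beta^{p}} G $, which gives the isomorphism $ \MULT{p}{e} $ in general and $ \MULT{e}{p} $ when $ \Y $ is essential.
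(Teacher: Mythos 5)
Your proposal is correct, and its computational core is the same as the paper's, but you organize it in a genuinely different way. The paper works directly with the product-system multiplication: it first proves associativity at the level of $ \Cc{G,\cdot} $-functions (its longest Fubini computation, from which balancedness and, via the identity-fiber identities $ f \zeta = \func{\overline{\Omega}_{e,p}}{f \otimes \zeta} $ and $ \zeta f = \func{\overline{\Omega}_{p,e}}{\zeta \otimes f} $, the $ \Br{A \rtimes_{\alpha} G} $-linearity of the induced maps $ \overline{\Omega}_{p,q} $ are extracted); it then matches the two $ \Cc{G,A} $-valued inner products by a computation that, exactly as in your step (ii), detours through the interior tensor product $ \Y_{p} \otimes_{A} \Y_{q} $; it obtains surjectivity from the bound $ \Norm{\zeta}{\Y_{p q} \rtimes_{\beta^{p q}} G} \leq \Norm{\zeta}{\Lp{1}{G,\Y_{p q}}} $ together with a bump-function approximation of $ f \odot \func{\MULT{p}{q}}{x \otimes y} $ by $ \func{\Omega_{p,q}}{\Br{g \odot x} \otimes \Br{f \odot y}} $; and it concludes with Lance's theorem that the surjective linear isometry is a unitary. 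You instead isolate the general equivariant statement $ \Br{\X \rtimes G} \otimes_{A \rtimes G} \Br{\mathsf{Z} \rtimes G} \cong \Br{\X \otimes_{A} \mathsf{Z}} \rtimes G $ and transport it along $ \MULT{p}{q} $. The transport is legitimate precisely because the action axiom $ \Act{\beta^{p q}}{s}{\zeta \eta} = \Act{\beta^{p}}{s}{\zeta} \Act{\beta^{q}}{s}{\eta} $ says that $ \MULT{p}{q} $ is $ G $-equivariant, hence induces an isomorphism of crossed products; you should state this explicitly, since it is the one place where the product-system structure (rather than the single-correspondence theory) enters. Your factorization buys a reusable Hao--Ng-type lemma on compatibility of crossed products with interior tensor products, and it makes Lance's theorem dispensable: inner-product preservation plus dense range already forces module-linearity and unitarity by the standard orthogonality argument. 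The paper's route avoids the intermediate object $ \Br{\Y_{p} \otimes_{A} \Y_{q}} \rtimes G $ at the cost of one long self-contained computation per axiom.

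Two points would make your plan airtight. First, associativity of the new multiplication is itself one of the properties to verify (the crossed product must be a semigroup), and it does not follow from your lemma applied to a single pair of fibers; you need either the paper's direct Fubini computation or an associativity/naturality statement for your map $ \Phi $ across three factors. This is not an incidental omission in the paper: its associativity computation is also the source of balancedness and of the module-linearity of $ \overline{\Omega}_{p,q} $. Second, your density step (iii) silently uses that the crossed-product norm on compactly supported functions is dominated by the $ \Lp{1}{G,\cdot} $-norm; the paper proves this estimate explicitly before running the approximate-identity argument, and your version of that argument needs it for the same reason.
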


\begin{proof}
Let us first prove that the multiplication law for $ \Y \rtimes_{\beta} G $ is associative on the function-algebra level. Let $ p,q,r \in P $, and let $ \zeta \in \Cc{G,\Y_{p}} $, $ \eta \in \Cc{G,\Y_{q}} $, and $ \xi \in \Cc{G,\Y_{r}} $. Then for all $ s \in G $,
\begin{align*}
    \FUNC{\Br{\zeta \eta} \xi}{s}
& = \Int{G}{\Func{\zeta \eta}{t} \Act{\beta^{r}}{t}{\func{\xi}{t^{- 1} s}}}{t} \\
& = \Int{G}{\SqBr{\Int{G}{\func{\zeta}{u} \Act{\beta^{q}}{u}{\func{\eta}{u^{- 1} t}}}{u}} \Act{\beta^{r}}{t}{\func{\xi}{t^{- 1}} s}}{t} \\
& = \Int{G \times G}{\SqBr{\func{\zeta}{u} \Act{\beta^{q}}{u}{\func{\eta}{u^{- 1} t}}} \Act{\beta^{r}}{t}{\func{\xi}{t^{- 1}} s}}
        {\Br{u \times t}} \\
& = \Int{G \times G}{\func{\zeta}{u} \SqBr{\Act{\beta^{q}}{u}{\func{\eta}{u^{- 1} t}} \Act{\beta^{r}}{t}{\func{\xi}{t^{- 1}} s}}}
        {\Br{u \times t}}
\end{align*}
and
\begin{align*}
    \FUNC{\zeta \Br{\eta \xi}}{r}
& = \Int{G}{\func{\zeta}{t} \Act{\beta^{q r}}{t}{\Func{\eta \xi}{t^{- 1} r}}}{t} \\
& = \Int{G}{\func{\zeta}{t} \Act{\beta^{q r}}{t}{\Int{G}{\func{\eta}{s} \Act{\beta^{r}}{s}{\func{\xi}{s^{- 1} t^{- 1} r}}}{s}}}{t} \\
& = \Int{G \times G}{\func{\zeta}{t} \Act{\beta^{q r}}{t}{\func{\eta}{s} \Act{\beta^{r}}{s}{\func{\xi}{s^{- 1} t^{- 1} r}}}}
        {\Br{s \times t}} \\
& = \Int{G \times G}{\func{\zeta}{s} \Act{\beta^{q r}}{s}{\func{\eta}{t} \Act{\beta^{r}}{t}{\func{\xi}{t^{- 1} s^{- 1} r}}}}
        {\Br{s \times t}} \\
& = \Int{G \times G}{\func{\zeta}{s} \Act{\beta^{q}}{s}{\func{\eta}{t}} \Act{\beta^{r}}{s t}{\func{\xi}{t^{- 1} s^{- 1} r}}}
        {\Br{s \times t}} \qquad \Br{\text{By the axioms for a group action.}} \\
& = \Int{G \times G}{\func{\zeta}{s} \Act{\beta^{q}}{s}{\func{\eta}{s^{- 1} t}} \Act{\beta^{r}}{t}{\func{\xi}{t^{- 1} r}}}{\Br{s \times t}}.
    \qquad \Br{\text{By the change of variables $ t \mapsto s^{- 1} t $.}}
\end{align*}
It follows that for all $ p,q \in P $ 
$$
\Map{\Cc{G,\Y_{p}} \times \Cc{G,\Y_{q}}}{\Cc{G,\Y_{p q}}}{\Pair{\zeta}{\eta}}{\zeta \eta}
$$
is a $ \Cc{G,A} $-balanced $ \C $-bilinear map, which then induces a $ \C $-linear map
$$
  \Omega_{p,q}
= \Map{\Cc{G,\Y_{p}} \otimes_{\Cc{G,A}} \Cc{G,\Y_{q}}}{\Cc{G,\Y_{p q}}}{\sum_{i = 1}^{n} \zeta_{i} \odot \eta_{i}}
      {\sum_{i = 1}^{n} \zeta_{i} \eta_{i}}.
$$
Let us show that $ \Omega_{p,q} $ extends uniquely to a $ \C $-linear isometry
$$
\overline{\Omega}_{p,q}:
\Br{\Y_{p} \rtimes_{\beta^{p}} G} \otimes_{A \rtimes_{\alpha} G} \Br{\Y_{q} \rtimes_{\beta^{q}} G} \to
\Y_{p q} \rtimes_{\beta^{p q}} G.
$$
Observe that for all $ \zeta_{1},\ldots,\zeta_{n} \in \Cc{G,\Y_{p}} $ and $ \eta_{1},\ldots,\eta_{n} \in \Cc{G,\Y_{q}} $ we have
\begin{align*}
  & ~ \Norm{\sum_{i = 1}^{n} \zeta_{i} \otimes \eta_{i}}
           {\Br{\Y_{p} \rtimes_{\beta^{p}} G} \otimes_{A \rtimes_{\alpha} G} \Br{\Y_{q} \rtimes_{\beta^{q}} G}} \\
= & ~ \Norm{
           \Inner{\sum_{i = 1}^{n} \zeta_{i} \otimes \eta_{i}}
                 {\sum_{j = 1}^{n} \zeta_{j} \otimes \eta_{j}}
                 {\Br{\Y_{p} \rtimes_{\beta^{p}} G} \otimes_{A \rtimes_{\alpha} G} \Br{\Y_{q} \rtimes_{\beta^{q}} G}}
           }
           {A \rtimes_{\alpha} G}^{\frac{1}{2}} \\
= & ~ \Norm{
           \sum_{i,j = 1}^{n}
           \Inner{\zeta_{i} \otimes \eta_{i}}{\zeta_{j} \otimes \eta_{j}}
                 {\Br{\Y_{p} \rtimes_{\beta^{p}} G} \otimes_{A \rtimes_{\alpha} G} \Br{\Y_{q} \rtimes_{\beta^{q}} G}}
           }
           {A \rtimes_{\alpha} G}^{\frac{1}{2}} \\
= & ~ \Norm{
           \sum_{i,j = 1}^{n}
           \Inner{\eta_{i}}{\Inner{\zeta_{i}}{\zeta_{j}}{\Y_{p} \rtimes_{\beta^{p}} G} \eta_{j}}{\Y_{q} \rtimes_{\beta^{q}} G}
           }
           {A \rtimes_{\alpha} G}^{\frac{1}{2}}
\end{align*}
and
\begin{align*}
    \Norm{\sum_{i = 1}^{n} \zeta_{i} \eta_{i}}{\Y_{p q} \rtimes_{\beta^{p q}} G}
& = \Norm{\Inner{\sum_{i = 1}^{n} \zeta_{i} \eta_{i}}{\sum_{j = 1}^{n} \zeta_{j} \eta_{j}}{\Y_{p q} \rtimes_{\beta^{p q}} G}}
         {A \rtimes_{\alpha} G}^{\frac{1}{2}} \\
& = \Norm{\sum_{i,j = 1}^{n} \Inner{\zeta_{i} \eta_{i}}{\zeta_{j} \eta_{j}}{\Y_{p q} \rtimes_{\beta^{p q}} G}}
         {A \rtimes_{\alpha} G}^{\frac{1}{2}}.
\end{align*}
To see that
$$
  \Norm{\sum_{i = 1}^{n} \zeta_{i} \otimes \eta_{i}}
       {\Br{\Y_{p} \rtimes_{\beta^{p}} G} \otimes_{A \rtimes_{\alpha} G} \Br{\Y_{q} \rtimes_{\beta^{q}} G}}
= \Norm{\sum_{i = 1}^{n} \zeta_{i} \eta_{i}}{\Y_{p q} \rtimes_{\beta^{p q}} G},
$$
it thus suffices to show that for all $ i,j=1,...,n $ 
$$
\Inner{\eta_{i}}{\Inner{\zeta_{i}}{\zeta_{j}}{\Y_{p} \rtimes_{\beta^{p}} G} \eta_{j}}{\Y_{q} \rtimes_{\beta^{q}} G}
\qquad \text{and} \qquad
\Inner{\zeta_{i} \eta_{i}}{\zeta_{j} \eta_{j}}{\Y_{p q} \rtimes_{\beta^{p q}} G}
$$
are identical elements of $ \Cc{G,A} $. Indeed, for all $ r \in G $,
\begin{align*}
  & ~ \func{\Inner{\eta_{i}}{\Inner{\zeta_{i}}{\zeta_{j}}{\Y_{p} \rtimes_{\beta^{p}} G} \eta_{j}}{\Y_{q} \rtimes_{\beta^{q}} G}}{r} \\
= & ~ \Int{G}
          {
          \Act{\alpha}{u^{- 1}}
              {\Inner{\func{\eta_{i}}{u}}{\Func{\Inner{\zeta_{i}}{\zeta_{j}}{\Y_{p} \rtimes_{\beta^{p}} G} \eta_{j}}{u r}}{\Y_{q}}}
          }
          {u} \\
= & ~ \Int{G}
          {
          \Act{\alpha}{u^{- 1}}
              {
              \Inner{\func{\eta_{i}}{u}}
                    {
                    \Int{G}
                        {\func{\Inner{\zeta_{i}}{\zeta_{j}}{\Y_{p} \rtimes_{\beta^{p}} G}}{t} \Act{\beta^{q}}{t}{\func{\eta_{j}}{t^{- 1} u r}}}
                        {t}
                    }
                    {\Y_{q}}
              }
          }
          {u} \\
= & ~ \Int{G \times G}
          {
          \Act{\alpha}{u^{- 1}}
              {
              \Inner{\func{\eta_{i}}{u}}
                    {\func{\Inner{\zeta_{i}}{\zeta_{j}}{\Y_{p} \rtimes_{\beta^{p}} G}}{t} \Act{\beta^{q}}{t}{\func{\eta_{j}}{t^{- 1} u r}}}
                    {\Y_{q}}
              }
          }
          {\Br{t \times u}} \\
= & ~ \Int{G \times G}
          {
          \Act{\alpha}{u^{- 1}}
              {
              \Inner{\func{\eta_{i}}{u}}
                    {
                    \SqBr{\Int{G}{\Act{\alpha}{s^{- 1}}{\Inner{\func{\zeta_{i}}{s}}{\func{\zeta_{j}}{s t}}{\Y_{p}}}}{s}}
                    \Act{\beta^{q}}{t}{\func{\eta_{j}}{t^{- 1} u r}}
                    }
                    {\Y_{q}}
              }
          }
          {\Br{t \times u}} \\
= & ~ \Int{G \times G \times G}
          {
          \Act{\alpha}{u^{- 1}}
              {
              \Inner{\func{\eta_{i}}{u}}
                    {
                    \Act{\alpha}{s^{- 1}}{\Inner{\func{\zeta_{i}}{s}}{\func{\zeta_{j}}{s t}}{\Y_{p}}}
                    \Act{\beta^{q}}{t}{\func{\eta_{j}}{t^{- 1} u r}}
                    }
                    {\Y_{q}}
              }
          }
          {\Br{s \times t \times u}}
\end{align*}
and
\begin{align*}
  & ~ \func{\Inner{\zeta_{i} \eta_{i}}{\zeta_{j} \eta_{j}}{\Y_{p q} \rtimes_{\beta^{p q}} G}}{r} \\
= & ~ \Int{G}{\Act{\alpha}{u^{- 1}}{\Inner{\Func{\zeta_{i} \eta_{i}}{u}}{\Func{\zeta_{j} \eta_{j}}{u r}}{\Y_{p q}}}}{u} \\
= & ~ \Int{G}
          {
          \Act{\alpha}{u^{- 1}}
              {
              \Inner{\Int{G}{\func{\zeta_{i}}{s} \Act{\beta^{q}}{s}{\func{\eta_{i}}{s^{- 1} u}}}{s}}
                    {\Int{G}{\func{\zeta_{j}}{t} \Act{\beta^{q}}{t}{\func{\eta_{j}}{t^{- 1} u r}}}{t}}
                    {\Y_{p q}}
              }
          }
          {u} \\
= & ~ \Int{G \times G \times G}
          {
          \Act{\alpha}{u^{- 1}}
              {
              \Inner{\func{\zeta_{i}}{s} \Act{\beta^{q}}{s}{\func{\eta_{i}}{s^{- 1} u}}}
                    {\func{\zeta_{j}}{t} \Act{\beta^{q}}{t}{\func{\eta_{j}}{t^{- 1} u r}}}
                    {\Y_{p q}}
              }
          }
          {\Br{s \times t \times u}} \\
= & ~ \Int{G \times G \times G}
          {
          \Act{\alpha}
              {u^{- 1}}
              {
              \Inner{\func{\zeta_{i}}{s} \otimes \Act{\beta^{q}}{s}{\func{\eta_{i}}{s^{- 1} u}}}
                    {\func{\zeta_{j}}{t} \otimes \Act{\beta^{q}}{t}{\func{\eta_{j}}{t^{- 1} u r}}}
                    {\Y_{p} \otimes_{A} \Y_{q}}
              }
          }
          {\Br{s \times t \times u}} \\
= & ~ \Int{G \times G \times G}
          {
          \Act{\alpha}
              {u^{- 1}}
              {\Inner{\Act{\beta^{q}}{s}{\func{\eta_{i}}{s^{- 1} u}}}{\Inner{\func{\zeta_{i}}{s}}{\func{\zeta_{j}}{t}}{\Y_{p}} \Act{\beta^{q}}{t}{\func{\eta_{j}}{t^{- 1} u r}}}{\Y_{q}}}
          }
          {\Br{s \times t \times u}} \\
= & ~ \Int{G \times G \times G}
          {
          \Act{\alpha}
              {u^{- 1} s}
              {\Inner{\func{\eta_{i}}{s^{- 1} u}}{\Act{\alpha}{s^{- 1}}{\Inner{\func{\zeta_{i}}{s}}{\func{\zeta_{j}}{t}}{\Y_{p}}} \Act{\beta^{q}}{s^{- 1} t}{\func{\eta_{j}}{t^{- 1} u r}}}{\Y_{q}}}
          }
          {\Br{s \times t \times u}} \\
  & ~ \Br{\text{By the axioms for a group action on a $ C^{\ast} $-correspondence.}} \\
= & ~ \Int{G \times G \times G}
          {
          \Act{\alpha}
              {u^{- 1} s}
              {
              \Inner{\func{\eta_{i}}{s^{- 1} u}}{\Act{\alpha}{s^{- 1}}{\Inner{\func{\zeta_{i}}{s}}{\func{\zeta_{j}}{s t}}{\Y_{p}}} \Act{\beta^{q}}{t}{\func{\eta_{j}}{t^{- 1} s^{- 1} u r}}}{\Y_{q}}
              }
          }
          {\Br{s \times t \times u}} \\
  & ~ \Br{\text{By the change of variables $ t \mapsto s t $.}} \\
= & ~ \Int{G \times G \times G}
          {
          \Act{\alpha}
              {u^{- 1}}
              {\Inner{\func{\eta_{i}}{u}}{\Act{\alpha}{s^{- 1}}{\Inner{\func{\zeta_{i}}{s}}{\func{\zeta_{j}}{s t}}{\Y_{p}}} \Act{\beta^{q}}{t}{\func{\eta_{j}}{t^{- 1} u r}}}{\Y_{q}}}
          }
          {\Br{s \times t \times u}}. \\
  & ~ \Br{\text{By the change of variables $ u \mapsto s u $.}}
\end{align*}
Hence,
$$
\forall r \in G: \qquad
  \func{\Inner{\eta_{i}}{\Inner{\zeta_{i}}{\zeta_{j}}{\Y_{p} \rtimes_{\beta^{p}} G} \eta_{j}}{\Y_{q} \rtimes_{\beta^{q}} G}}{r}
= \func{\Inner{\zeta_{i} \eta_{i}}{\zeta_{j} \eta_{j}}{\Y_{p q} \rtimes_{\beta^{p q}} G}}{r}
$$
as claimed, so
$$
  \Norm{\sum_{i = 1}^{n} \zeta_{i} \otimes \eta_{i}}{\Br{\Y_{p} \rtimes_{\beta^{p}} G} \otimes_{A \rtimes_{\alpha} G} \Br{\Y_{q} \rtimes_{\beta^{q}} G}}
= \Norm{\func{\Omega_{p,q}}{\sum_{i = 1}^{n} \zeta_{i} \otimes \eta_{i}}}{\Y_{p q} \rtimes_{\beta^{p q}} G}.
$$
As $ \Cc{G,\Y_{p}} \otimes_{\Cc{G,A}} \Cc{G,\Y_{q}} $ is dense in $ \Br{\Y_{p} \rtimes_{\beta^{p}} G} \otimes_{A \rtimes_{\alpha} G} \Br{\Y_{q} \rtimes_{\beta^{q}} G} $, we can conclude that $ \Omega_{p,q} $ extends uniquely to a $ \C $-linear isometry
$$
\overline{\Omega}_{p,q}: \Br{\Y_{p} \rtimes_{\beta^{p}} G} \otimes_{A \rtimes_{\alpha} G} \Br{\Y_{q} \rtimes_{\beta^{q}} G} \to \Y_{p q} \rtimes_{\beta^{p q}} G.
$$

We wish to show that $ \overline{\Omega}_{p,q} $ is $ \Br{A \rtimes_{\alpha} G} $-linear for all $ p,q \in P $, but this will turn out to be a consequence of the following two facts about these maps, which we need to prove in any case:
\begin{itemize}
\item
For $ p \in P $, $ f \in A \rtimes_{\alpha} G $, and $ \zeta \in \Y_{p} \rtimes_{\beta^{p}} G $,
$$
f \zeta = \func{\overline{\Omega}_{e,p}}{f \otimes \zeta}
\qquad \text{and} \qquad
\zeta f = \func{\overline{\Omega}_{p,e}}{\zeta \otimes f}.
$$

\item
For $ p,q,r \in P $, $ \zeta \in \Y_{p} \rtimes_{\beta^{p}} G $, $ \eta \in \Y_{q} \rtimes_{\beta^{q}} G $, and $ \xi \in Y_{r} \rtimes_{\beta^{r}} G $,
$$
  \func{\overline{\Omega}_{p q,r}}{\func{\overline{\Omega}_{p,q}}{\zeta \otimes \eta} \otimes \xi}
= \func{\overline{\Omega}_{p,q r}}{\zeta \otimes \func{\overline{\Omega}_{q,r}}{\eta \otimes \xi}},
$$
which holds because the multiplication law of the product system is associative.
\end{itemize}

To prove the first fact, let $ p \in P $. Then for all $ f \in \Cc{G,A} $, $ \zeta \in \Cc{G,\Y_{p}} $, and $ s \in G $,
\begin{align*}
    \FUNC{\func{\Omega_{e,p}}{f \otimes \zeta}}{s}
& = \Int{G}{\func{\MULT{e}{p}}{\func{f}{t} \otimes \Act{\beta^{p}}{t}{\func{\zeta}{t^{- 1} s}}}}{t} \\
& = \Int{G}{\func{f}{r} \Act{\beta^{p}}{t}{\func{\zeta}{t^{- 1} s}}}{t} \\
& = \Func{f \zeta}{s}.
\end{align*}
By continuity, therefore, $ f \zeta = \func{\overline{\Omega}_{e,p}}{f \otimes \zeta} $ for all $ f \in A \rtimes_{\alpha} G $ and $ \zeta \in \Y_{p} \rtimes_{\beta^{p}} G $, and the same kind of argument establishes that $ \zeta f = \func{\overline{\Omega}_{p,e}}{\zeta \otimes f} $ also.

Now, to see the $ \Br{A \rtimes_{\alpha} G} $-linearity of $ \overline{\Omega}_{p,q} $ for all $ p,q \in P $, simply observe for all $ f \in A \rtimes_{\alpha} G $, $ \zeta \in \Y_{p} \rtimes_{\beta^{p}} G $, and $ \eta \in \Y_{q} \rtimes_{\beta^{q}} G $ that
\begin{align*}
    \func{\overline{\Omega}_{p,q}}{\Br{\zeta \otimes \eta} f}
& = \func{\overline{\Omega}_{p,q}}{\zeta \otimes \eta f} \\
& = \func{\overline{\Omega}_{p,q}}{\zeta \otimes \func{\overline{\Omega}_{q,e}}{\eta \otimes f}} \\
& = \func{\overline{\Omega}_{p q,e}}{\func{\overline{\Omega}_{p,q}}{\zeta \otimes \eta} \otimes f} \\
& = \func{\overline{\Omega}_{p,q}}{\zeta \otimes \eta} f.
\end{align*}
By linearity and continuity, $ \overline{\Omega}_{p,q} $ is therefore $ \Br{A \rtimes_{\alpha} G} $-linear.

Finally, we will prove that $ \overline{\Omega}_{p,q} $ is surjective for all $ p,q \in P $. Firstly, note that for all $ p \in P $ and $ \zeta \in \Cc{G,\Y_{p}} $,
\begin{align*}
       \Norm{\zeta}{\Y_{p} \rtimes_{\beta^{p}} G}
& =    \Norm{\Inner{\zeta}{\zeta}{\Y_{p} \rtimes_{\beta^{p}} G}}{A \rtimes_{\alpha} G}^{\frac{1}{2}} \\
& \leq \Norm{\Inner{\zeta}{\zeta}{\Y_{p} \rtimes_{\beta^{p}} G}}{\Lp{1}{G,A}}^{\frac{1}{2}} \\
& =    \SqBr{\Int{G}{\Norm{\func{\Inner{\zeta}{\zeta}{\Y_{p} \rtimes_{\beta^{p}} G}}{t}}{A}}{t}}^{\frac{1}{2}} \\
& =    \SqBr{\Int{G}{\Norm{\Int{G}{\Act{\alpha}{s^{- 1}}{\Inner{\func{\zeta}{s}}{\func{\zeta}{s t}}{\Y_{p}}}}{s}}{A}}{t}}^{\frac{1}{2}} \\
& \leq \SqBr{\Int{G \times G}{\Norm{\Act{\alpha}{s^{- 1}}{\Inner{\func{\zeta}{s}}{\func{\zeta}{s t}}{\Y_{p}}}}{A}}{\Br{s \times t}}}^{\frac{1}{2}} \\
& =    \SqBr{\Int{G \times G}{\Norm{\Inner{\func{\zeta}{s}}{\func{\zeta}{s t}}{\Y_{p}}}{A}}{\Br{s \times t}}}^{\frac{1}{2}} \\
& \leq \SqBr{\Int{G \times G}{\Norm{\func{\zeta}{s}}{\Y_{p}} \Norm{\func{\zeta}{s t}}{\Y_{p}}}{\Br{s \times t}}}^{\frac{1}{2}} \quad
       \Br{\text{By the Cauchy-Schwarz Inequality.}} \\
& =    \SqBr{\Int{G}{\Br{\Norm{\func{\zeta}{s}}{\Y_{p}} \Int{G}{\Norm{\func{\zeta}{s t}}{\Y_{p}}}{t}}}{s}}^{\frac{1}{2}} \\
& =    \SqBr{\Int{G}{\Norm{\func{\zeta}{s}}{\Y_{p}} \Norm{\zeta}{\Lp{1}{G,\Y_{p}}}}{s}}^{\frac{1}{2}} \\
& =    \SqBr{\Norm{\zeta}{\Lp{1}{G,\Y_{p}}}^{2}}^{\frac{1}{2}} \\
& =    \Norm{\zeta}{\Lp{1}{G,\Y_{p}}}.
\end{align*}
Fix $ p,q \in P $. Clearly, we can approximate a function $ \zeta \in \Cc{G,\Y_{p q}} $ with respect to $ \Norm{\cdot}{\Lp{1}{G,\Y_{p q}}} $ --- and hence with respect to $ \Norm{\cdot}{\Y_{p q} \rtimes_{\beta^{p q}} G} $ --- by a linear combination of functions of the form $ f \odot z $, where $ f \in \Cc{G} $ and $ z \in \Y_{p q} $. As $ \MULT{p}{q}: \Y_{p} \otimes_{A} \Y_{q} \to \Y_{p q} $ is an isomorphism, we can approximate $ z $ itself by a linear combination of elements of $ \Y_{p q} $ of the form $ \func{\MULT{p}{q}}{x \otimes y} $, where $ x \in \Y_{p} $ and $ y \in \Y_{q} $. Now, for any $ \epsilon > 0 $, we can find an open neighborhood $ U $ of $ e_{G} $ and a non-negative function $ g \in \Cc{G,\R} $ with $ \Supp{g} \subseteq U $ and integral $ 1 $ such that
$$
\Norm{f \odot \func{\MULT{p}{q}}{x \otimes y} - \func{\Omega_{p,q}}{\Br{g \odot x} \otimes \Br{f \odot y}}}{\Lp{1}{G,\Y_{p q}}} < \epsilon.
$$
This yields, according to the foregoing discussion,
$$
\Norm{f \odot \func{\MULT{p}{q}}{x \otimes y} - \func{\Omega_{p,q}}{\Br{g \odot x} \otimes \Br{f \odot y}}}{\Y_{p q} \rtimes_{\beta^{p q}} G} < \epsilon.
$$
Therefore, $ \Range{\overline{\Omega}_{p,q}} $ is dense in $ \Y_{p q} \rtimes_{\beta^{p q} G} $, and as $ \overline{\Omega}_{p,q} $ is an isometry between Banach spaces, it follows that $ \overline{\Omega}_{p,q} $ is surjective.

As $ \overline{\Omega}_{p,q} $ is a surjective $ \Br{A \rtimes_{\alpha} G} $-linear isometry for all $ p,q \in P $, we can apply the main result of \cite{L} by Lance to conclude that it is a unitary operator.
\end{proof}



\begin{Thm}
Suppose that a group $ G $ acts on a row-finite and faithful $ P $-indexed product system $ \Y $ over $A$ via automorphisms $ \beta^{p}_{g} $. Then $ G $ acts on $ \O{\Y} $ via automorphisms denoted by $ \gamma_{g} $. Moreover, if $ G $ is amenable, then $ \Y \rtimes_{\beta} G $ is row-finite and faithful, and
$$
\O{\Y} \rtimes_{\gamma} G \cong \O{\Y \rtimes_{\beta} G}.
$$
\end{Thm}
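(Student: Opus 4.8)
The plan is to treat the three assertions in turn, using the universal properties of the Cuntz--Pimsner algebras throughout and reducing everything to the single-fiber data. For the existence of $ \gamma $, I would fix $ g \in G $ and check that the family $ \Seq{j_{\Y} \circ \beta^{p}_{g}}{p \in P} $ is a Cuntz--Pimsner covariant Toeplitz representation of $ \Y $ in $ \O{\Y} $. The Toeplitz axioms follow because $ \beta $ consists of isometric, multiplicative, $ \alpha $-equivariant maps; and since $ \beta^{p}_{g} $ is a surjective isometry with $ \beta^{p}_{g} \func{\phi_{p}}{a} \Br{\beta^{p}_{g}}^{-1} = \func{\phi_{p}}{\func{\alpha}{g}{a}} $, conjugation by $ \beta^{p}_{g} $ sends $ \Thet{}{\xi}{\eta} $ to $ \Thet{}{\beta^{p}_{g} \xi}{\beta^{p}_{g} \eta} $ and hence carries $ \Comp{\Y_{p}} $ onto itself and preserves Fowler covariance. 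Universality produces $ \gamma_{g} \in \Aut{\O{\Y}} $ with $ \gamma_{g} \circ j_{\Y} = j_{\Y} \circ \beta_{g} $; uniqueness of the induced map gives the homomorphism property of $ g \mapsto \gamma_{g} $, and strong continuity of $ \beta $ on each fiber together with a $ 3\epsilon $/density argument on the generators $ \Im{j_{\Y}}{\Y} $ gives point-norm continuity. Row-finiteness makes the relevant Fowler ideal equal to $ A $, so covariance reads simply $ \func{j_{\Y}^{\Br{p}}}{\func{\phi_{p}}{a}} = \func{j_{e}}{a} $, where $ j_{e} = j_{\Y}|_{A} $.

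For preservation of row-finiteness and faithfulness the key technical input is the natural identification, for each $ p $,
\[ \Comp{\Y_{p} \rtimes_{\beta^{p}} G} \cong \Comp{\Y_{p}} \rtimes G, \]
(the crossed product by the induced action $ \func{\operatorname{Ad}}{\beta^{p}} $), under which the left action $ \widetilde{\phi}_{p} $ of $ A \rtimes_{\alpha} G $ on $ \Y_{p} \rtimes_{\beta^{p}} G $ becomes the crossed-product homomorphism $ \phi_{p} \rtimes G $. Granting this, row-finiteness of $ \Y $, i.e. $ \func{\phi_{p}}{A} \subseteq \Comp{\Y_{p}} $, forces $ \widetilde{\phi}_{p} $ to take values in $ \Comp{\Y_{p}} \rtimes G \cong \Comp{\Y_{p} \rtimes_{\beta^{p}} G} $, so $ \Y \rtimes_{\beta} G $ is row-finite. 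For faithfulness I would use amenability of $ G $: the full and reduced crossed products coincide, and the reduced-crossed-product functor preserves injectivity, so injectivity of each $ \phi_{p} $ passes to $ \phi_{p} \rtimes G = \widetilde{\phi}_{p} $.

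For the Hao--Ng isomorphism I would construct homomorphisms in both directions and check that they are mutually inverse on generators. For the forward map $ \Phi \colon \O{\Y \rtimes_{\beta} G} \to \O{\Y} \rtimes_{\gamma} G $, define $ k_{p} \colon \Cc{G,\Y_{p}} \to \O{\Y} \rtimes_{\gamma} G $ by sending $ \zeta $ to the function $ s \mapsto \func{j_{\Y}}{\func{\zeta}{s}} $ in $ \Cc{G,\O{\Y}} $. The inner-product and multiplicativity axioms reduce, after unwinding the twisted-convolution formulas of $ \O{\Y} \rtimes_{\gamma} G $ and of the crossed-product correspondence, to the relation $ \gamma_{s} \circ j_{\Y} = j_{\Y} \circ \beta_{s} $ and to the Toeplitz axioms of $ j_{\Y} $; covariance of $ k $ follows from that of $ j_{\Y} $ once one checks that $ k^{\Br{p}} $ corresponds to $ j_{\Y}^{\Br{p}} \rtimes G $ under the identification of the second step, whence $ \func{k^{\Br{p}}}{\func{\widetilde{\phi}_{p}}{a}} = \func{\Br{j_{e} \rtimes G}}{a} = \func{k_{e}}{a} $. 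Universality then yields $ \Phi $. For the inverse I would assemble a covariant representation $ \Pair{\pi}{u} $ of $ \Pair{\O{\Y}}{G} $ in $ M\Br{\O{\Y \rtimes_{\beta} G}} $, with $ u $ coming from the canonical unitaries and $ \pi $ from a representation of $ \Y $ by multipliers, integrate to $ \Psi \colon \O{\Y} \rtimes_{\gamma} G \to \O{\Y \rtimes_{\beta} G} $, and verify that $ \Phi \circ \Psi $ and $ \Psi \circ \Phi $ are the identity on the generating sets $ \Cc{G,\O{\Y}} $ and $ \Im{j_{\Y \rtimes G}}{\Cc{G,\Y_{p}}} $. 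Alternatively, a gauge-invariant uniqueness theorem for $ \O{\Y} $ could replace the explicit construction of $ \Psi $, giving injectivity of $ \Phi $ directly from equivariance for the dual gauge structures and injectivity on the coefficient algebra.

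The main obstacle is concentrated in the second and third steps, and is twofold: establishing the compact-operator identification $ \Comp{\Y_{p} \rtimes_{\beta^{p}} G} \cong \Comp{\Y_{p}} \rtimes G $ together with the compatibility of the left actions and of the induced maps $ k^{\Br{p}} $ and $ j_{\Y}^{\Br{p}} \rtimes G $, and constructing the multiplier-valued representation $ \pi $ so that $ \Psi $ lands inside $ \O{\Y \rtimes_{\beta} G} $ rather than merely its multiplier algebra. Amenability of $ G $ is precisely what makes both the faithfulness in the second step and the inversion in the third step go through.
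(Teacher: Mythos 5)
Your proposal follows essentially the same route as the paper's proof: you construct $ \gamma_{g} $ by checking that $ j_{\Y} \circ \beta_{g} $ is a Cuntz--Pimsner covariant Toeplitz representation and invoking universality; you obtain row-finiteness and faithfulness of $ \Y \rtimes_{\beta} G $ from the Hao--Ng identification $ \Comp{\Y_{p}} \rtimes_{\tau^{p}} G \cong \Comp{\Y_{p} \rtimes_{\beta^{p}} G} $ (which the paper likewise quotes from Hao--Ng rather than reproving, recording only the explicit inverse $ \Lambda $) together with amenability (full equals reduced, and reduced crossed products preserve injectivity); and your forward map $ \zeta \mapsto j_{\Y} \circ \zeta $ on $ \Cc{G,\Y_{p}} $ is exactly the paper's representation $ \psi $. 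The only divergence is your primary plan to invert $ \Phi $ by integrating a covariant pair in the multiplier algebra, whereas the paper takes precisely your stated alternative --- surjectivity on generators plus injectivity of $ \psi_{e} $ and a gauge action of $ \T^{k} $, via the gauge-invariant uniqueness theorem --- thereby avoiding the multiplier-valued construction you correctly flag as the delicate point.
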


\begin{proof}
Let $ p \in P $. Recall the strongly-continuous action $ \tau^{p} $ of $ G $ on $ \Comp{\Y_{p}} $ given by
$$
\forall x,y \in \Y_{p}: \qquad
\Act{\tau^{p}}{g}{\Theta_{x,y}} = \Theta_{\Act{\beta^{p}}{g}{x},\Act{\beta^{p}}{g}{y}}.
$$
The left-action $ \phi_{p}: A \to \Comp{\Y_{p}} $ is injective by assumption. To see that it is equivariant for $ \alpha $ and $ \tau^{p} $, firstly observe that for all $ g \in G $, $ a \in A $, and $ x \in \Y_{p} $ 
$$
  \Act{\beta^{p}}{g}{\FUNC{\func{\phi_{p}}{a}}{x}}
= \Act{\beta^{p}}{g}{a x}
= \Act{\alpha}{g}{a} \Act{\beta^{p}}{g}{x}
= \FUNC{\func{\phi_{p}}{\Act{\alpha}{g}{a}}}{\Act{\beta^{p}}{g}{x}},
$$
so $ \beta^{p}_{g} \circ \func{\phi_{p}}{a} = \func{\phi_{p}}{\Act{\alpha}{g}{a}} \circ \beta^{p}_{g} $; equivalently, $ \beta^{p}_{g} \circ \func{\phi_{p}}{a} \circ \beta^{p}_{g^{- 1}} = \func{\phi_{p}}{\Act{\alpha}{g}{a}} $. 
Next, observe for all $ g \in G $ and $ x,y,z \in \Y_{p} $ that
\begin{align*}
    \Func{\beta^{p}_{g} \circ \Theta_{x,y} \circ \beta^{p}_{g^{- 1}}}{z}
& = \Act{\beta^{p}}{g}{x \Inner{y}{\Act{\beta^{p}}{g^{- 1}}{z}}{\Y_{p}}} \\
& = \Act{\beta^{p}}{g}{x} \Act{\alpha}{g}{\Inner{y}{\Act{\beta^{p}}{g^{- 1}}{z}}{\Y_{p}}} \\
& = \Act{\beta^{p}}{g}{x} \Inner{\Act{\beta^{p}}{g}{y}}{z}{\Y_{p}} \\
& = \func{\Theta_{\Act{\beta^{p}}{g}{x},\Act{\beta^{p}}{g}{y}}}{z},
\end{align*}
so $ \Act{\tau^{p}}{g}{\Theta_{x,y}} = \beta^{p}_{g} \circ \Theta_{x,y} \circ \beta^{p}_{g^{- 1}} $. In particular, as $ \Range{\phi_{p}} \subseteq \Comp{\Y_{p}} $, we have
$$
\forall a \in A: \qquad
  \Act{\tau^{p}}{g}{\func{\phi_{p}}{a}}
= \beta^{p}_{g} \circ \func{\phi_{p}}{a} \circ \beta^{p}_{g^{- 1}}
= \func{\phi_{p}}{\Act{\alpha}{g}{a}},
$$
which proves the equivariance of $ \phi_{p} $ for $ \alpha $ and $ \tau^{p} $. According to the theory of reduced $ C^{\ast} $-crossed products, $ \phi_{p} $ induces the injective $ \ast $-homomorphism
$$
\overline{\phi_{p}}: A \rtimes_{\alpha,\red} G \to \Comp{\Y_{p}} \rtimes_{\tau^{p},\red} G,
$$
where $ \func{\overline{\phi_{p}}}{f} = \phi_{p} \circ f $ for all $ f \in \Cc{G,A} $. However, $ G $ is amenable, so
$$
\overline{\phi_{p}}: A \rtimes_{\alpha} G \to \Comp{\Y_{p}} \rtimes_{\tau^{p}} G \stackrel{\cong}{\longrightarrow} \Comp{\Y_{p} \rtimes_{\beta^{p}} G},
$$
where the inverse of the $ \ast $-isomorphism on the right is defined in \cite{HN} by
$$
\forall \zeta,\eta \in \Cc{G,\Y_{p}}, ~ \forall s \in G: \qquad
  \FUNC{\func{\Lambda}{\Theta_{\zeta,\eta}}}{s}
= \Int{G}{\func{\Delta}{s^{- 1} r} \Theta_{\func{\zeta}{r},\Act{\beta^{p}}{s}{\func{\eta}{s^{- 1} r}}}}{r},
$$
where $ \Delta $ is the modular function of $ G $. Therefore, $ \Y \rtimes_{\beta} G $ is also a row-finite and faithful product system, as claimed.

Next, we show that there exists a strongly-continuous action $ \gamma $ of $ G $ on $ \O{\Y} $ that satisfies
\begin{equation} \label{The Group Action on the Cuntz-Pimsner Algebra}
\forall g \in G, ~ \forall p \in P, ~ \forall y \in \Y_{p}: \qquad
\Act{\gamma}{g}{\func{j_{\Y}}{y}} = \func{j_{\Y}}{\Act{\beta^{p}}{g}{y}},
\end{equation}
where $ j_{\Y}: \Y \to \O{\Y} $ denotes the universal Cuntz-Pimsner representation. Let $ g \in G $. Then the map $ \Psi_{g}: \Y \to \O{\Y} $ defined by $ \func{\Psi_{g}}{y} \df \func{j_{\Y}}{\Act{\beta^{p}}{g}{y}} $ for all $ p \in P $ and $ y \in \Y_{p} $ is a Cuntz-Pimsner representation of $ \Y $ on $ \O{\Y} $:
\begin{itemize}
\item
For all $ p,q \in P $, $ x \in \Y_{p} $, and $ y \in \Y_{q} $, we have
\begin{align*}
    \func{\Psi_{g}}{x y}
& = \func{j_{\Y}}{\Act{\beta^{p + q}}{g}{x y}} \\
& = \func{j_{\Y}}{\Act{\beta^{p}}{g}{x} \Act{\beta^{q}}{g}{y}} \\
& = \func{j_{\Y}}{\Act{\beta^{p}}{g}{x}} \func{j_{\Y}}{\Act{\beta^{q}}{g}{y}} \\
& = \func{\Psi_{g}}{x} \func{\Psi_{g}}{y}.
\end{align*}

\item
For all $ p \in P $ and $ x,y \in \Y_{p} $, we have
\begin{align*}
    \func{\Psi_{g}}{\Inner{x}{y}{\Y_{p}}}
& = \func{j_{\Y}}{\Act{\alpha}{g}{\Inner{x}{y}{\Y_{p}}}} \\
& = \func{j_{\Y}}{\Inner{\Act{\beta^{p}}{g}{x}}{\Act{\beta^{p}}{g}{y}}{\Y_{p}}} \\
& = \func{j_{\Y}}{\Act{\beta^{p}}{g}{x}}^{\ast} \func{j_{\Y}}{\Act{\beta^{p}}{g}{y}} \\
& = \func{\Psi_{g}}{x}^{\ast} \func{\Psi_{g}}{y}.
\end{align*}

\item
Let $ p \in P $. The foregoing argument tells us that $ \Psi_{g} $ is a Toeplitz representation of $ \Y $ on $ \O{\Y} $, so there exists an extension $ \Psi_{g}^{\Br{p}}: \Comp{\Y_{p}} \to \O{\Y} $ such that
\begin{align*}
\forall x,y \in \Y_{P}: \qquad
    \func{\Psi_{g}^{\Br{p}}}{\Theta_{x,y}}
& = \func{\Psi_{g}}{x} \func{\Psi_{g}}{y}^{\ast} \\
& = \func{j_{\Y}}{\Act{\beta^{p}}{g}{x}} \func{j_{\Y}}{\Act{\beta^{p}}{g}{y}}^{\ast} \\
& = \func{j_{\Y}^{\Br{p}}}{\Theta_{\Act{\beta^{p}}{g}{x},\Act{\beta^{p}}{g}{y}}} \\
& = \func{j_{\Y}^{\Br{p}}}{\Act{\tau^{p}}{g}{\Theta_{x,y}}},
\end{align*}
which implies by continuity that $ \Psi_{g}^{\Br{p}} = j_{\Y}^{\Br{p}} \circ \tau^{p}_{g} $. As we have shown that $ \phi_{p} $ is equivariant for $ \alpha $ and $ \tau^{p} $, we thus have
$$
\forall a \in A: \qquad
  \func{\Psi_{g}^{\Br{p}}}{\func{\phi_{p}}{a}}
= \func{j_{\Y}^{\Br{p}}}{\Act{\tau^{p}}{g}{\func{\phi_{p}}{a}}}
= \func{j_{\Y}^{\Br{p}}}{\func{\phi_{p}}{\Act{\alpha}{g}{a}}}
= \func{j_{\Y}}{\Act{\alpha}{g}{a}}
= \func{\Psi_{g}}{a},
$$
proving that $ \Psi_{g} $ is a Cuntz-Pimsner representation of $ \Y $.
\end{itemize}
By universality, there is thus a $ C^{\ast} $-endomorphism $ S $ on $ \O{\Y} $ such that
$$
\forall p \in P, ~ \forall y \in \Y_{p}: \qquad
\func{S}{\func{j_{\Y}}{y}} = \func{j_{\Y}}{\Act{\beta^{p}}{g}{y}}.
$$
Similarly, there is a $ C^{\ast} $-endomorphism $ T $ on $ \O{\Y} $ such that
$$
\forall p \in P, ~ \forall y \in \Y_{p}: \qquad
\func{T}{\func{j_{\Y}}{y}} = \func{j_{\Y}}{\Act{\beta^{p}}{g^{- 1}}{y}}.
$$
As $ S T = \Id_{\O{\Y}} = T S $, we see that $ S $ is a $ C^{\ast} $-isomorphism, and as $ g $ is arbitrary, there is an action $ \gamma $ of $ G $ on $ \O{\Y} $ that satisfies \eqref{The Group Action on the Cuntz-Pimsner Algebra}. The strong continuity of $ \gamma $ immediately follows from the continuity of $ j_{\Y} $.

We now show that a Cuntz-Pimsner representation $ \psi: \Y \rtimes_{\beta} G \to \O{\Y} \rtimes_{\gamma} G $ exists and that it satisfies
$$
\forall p \in P, ~ \forall \zeta \in \Cc{G,\Y_{p}}: \qquad
\func{\psi_{p}}{\zeta} = j_{\Y} \circ \zeta.
$$
As $ j_{\Y}|_{A}: A \to \O{\Y} $ is a $ \ast $-homomorphism, and as $ \Act{\gamma}{g}{\func{j_{\Y}}{a}} = \func{j_{\Y}}{\Act{\alpha}{g}{a}} $ for all $ a \in A $, we find that $ j_{\Y}|_{A} $ is equivariant for $ \alpha $ and $ \gamma $. Hence, $ j_{\Y}|_{A} $ induces a $ \ast $-homomorphism
$$
\psi_{e}: A \rtimes_{\alpha} G \to \O{\Y} \rtimes_{\gamma} G
$$
such that $ \func{\psi_{e}}{f} = j_{\Y} \circ f $ for all $ f \in \Cc{G,A} $. Let $ p \in P $ and $ \zeta,\eta \in \Cc{G,\Y_{p}} $. Then for all $ s \in G $,
\begin{align*}
    \FUNC{\Br{j_{\Y} \circ \zeta}^{\ast} \Br{j_{\Y} \circ \zeta}}{s}
& = \Int{G}{\func{\Br{j_{\Y} \circ \zeta}^{\ast}}{r} \Act{\gamma}{r}{\Func{j_{\Y} \circ \zeta}{r^{- 1} s}}}{r} \\
& = \Int{G}
        {
        \func{\Delta}{r^{- 1}} \cdot
        \Act{\gamma}{r}{\func{j_{\Y}}{\func{\zeta}{r^{- 1}}}^{\ast}} \Act{\gamma}{r}{\func{j_{\Y}}{\func{\zeta}{r^{- 1} s}}}
        }
        {r} \\
& = \Int{G}{\Act{\gamma}{r^{- 1}}{\func{j_{\Y}}{\func{\zeta}{r}}^{\ast}} \Act{\gamma}{r^{- 1}}{\func{j_{\Y}}{\func{\zeta}{r s}}}}{r} \\
& = \Int{G}{\Act{\gamma}{r^{- 1}}{\func{j_{\Y}}{\func{\zeta}{r}}^{\ast} \func{j_{\Y}}{\func{\zeta}{r s}}}}{r} \\
& = \Int{G}{\Act{\gamma}{r^{- 1}}{\func{j_{\Y}}{\Inner{\func{\zeta}{r}}{\func{\zeta}{r s}}{\Y_{p}}}}}{r} \\
& = \Int{G}{\func{j_{\Y}}{\Act{\alpha}{r^{- 1}}{\Inner{\func{\zeta}{r}}{\func{\zeta}{r s}}{\Y_{p}}}}}{r} \\
& = \func{j_{\Y}}{\Int{G}{\Act{\alpha}{r^{- 1}}{\Inner{\func{\zeta}{r}}{\func{\zeta}{r s}}{\Y_{p}}}}{r}} \\
& = \func{j_{\Y}}{\func{\Inner{\zeta}{\zeta}{\Y_{p} \rtimes_{\beta^{p}} G}}{s}} \\
& = \FUNC{\func{\psi_{0}}{\Inner{\zeta}{\zeta}{\Y_{p} \rtimes_{\beta^{p}} G}}}{s},
\end{align*}
so
\begin{align*}
       \Norm{j_{\Y} \circ \zeta}{\O{\Y} \rtimes_{\gamma} G}
& =    \Norm{\Br{j_{\Y} \circ \zeta}^{\ast} \Br{j_{\Y} \circ \zeta}}{\O{\Y} \rtimes_{\gamma} G}^{\frac{1}{2}} \\
& =    \Norm{\func{\psi_{0}}{\Inner{\zeta}{\zeta}{\Y_{p} \rtimes_{\beta^{p}} G}}}{\O{\Y} \rtimes_{\gamma} G}^{\frac{1}{2}} \\
& \leq \Norm{\Inner{\zeta}{\zeta}{\Y \rtimes_{\beta^{p}} G}}{A \rtimes_{\alpha} G}^{\frac{1}{2}} \\
& =    \Norm{\zeta}{\Y \rtimes_{\beta} G}.
\end{align*}
In light of this norm-inequality, there exists a continuous linear map
$$
\psi_{p}: \Y_{p} \rtimes_{\beta^{p}} G \to \O{\Y} \rtimes_{\gamma} G
$$
such that $ \func{\psi_{p}}{\zeta} = j_{\Y} \circ \zeta $ for all $ \zeta \in \Cc{G,\Y_{p}} $. By combining the various $ \psi_{p} $'s, we get a map $ \psi: \Y \rtimes_{\beta} G \to \O{\Y} \rtimes_{\gamma} G $. The following show that $ \psi $ is a Toeplitz representation:
\begin{itemize}
\item
As seen above, $ \func{\psi_{e}}{\Inner{\zeta}{\zeta}{\Y_{p} \rtimes_{\beta^{p}} G}} = \func{\psi_{p}}{\zeta}^{\ast} \func{\psi_{p}}{\zeta} $ for all $ p \in P $ and $ \zeta \in \Cc{G,\Y_{p}} $.

\item
For all $ p,q \in P $, $ \zeta \in \Y_{p} \rtimes_{\beta^{p}} G $, $ \eta \in \Y_{q} \rtimes_{\beta^{q}} G $, and $ s \in G $,
\begin{align*}
    \FUNC{\func{\psi_{p}}{\zeta} \func{\psi_{q}}{\eta}}{s}
& = \Int{G}{\FUNC{\func{\psi_{p}}{\zeta}}{r} \Act{\gamma}{r}{\FUNC{\func{\psi_{q}}{\eta}}{r^{- 1} s}}}{r} \\
& = \Int{G}{\func{j_{\Y}}{\func{\zeta}{r}} \Act{\gamma}{r}{\func{j_{\Y}}{\func{\eta}{r^{- 1} s}}}}{r} \\
& = \Int{G}{\func{j_{\Y}}{\func{\zeta}{r}} \func{j_{\Y}}{\Act{\beta^{q}}{r}{\func{\eta}{r^{- 1} s}}}}{r} \\
& = \func{j_{\Y}}{\Int{G}{\func{\zeta}{r} \Act{\beta^{q}}{r}{\func{\eta}{r^{- 1} s}}}{r}} \\
& = \func{j_{\Y}}{\Func{\zeta \eta}{s}} \\
& = \FUNC{\func{\psi_{p + q}}{\zeta \eta}}{s},
\end{align*}
so $ \func{\psi_{p}}{\zeta} \func{\psi_{q}}{\eta} = \func{\psi_{p + q}}{\zeta \eta} $.
\end{itemize}
It thus remains to check Cuntz-Pimsner covariance. If
$$
\psi^{\Br{p}}: \Comp{\Y_{p} \rtimes_{\beta^{p}} G} \to \O{\Y} \rtimes_{\gamma} G
$$
denotes the extension of $ \psi_{p} $, then letting $ p \in P $, $ \zeta,\eta \in \Cc{G,\Y_{p}} $, and $ s \in G $,
\begin{align*}
    \FUNC{\func{\psi^{\Br{p}}}{\Theta_{\zeta,\eta}}}{s}
& = \FUNC{\func{\psi_{p}}{\zeta} \func{\psi_{p}}{\eta}^{\ast}}{s} \\
& = \Int{G}{\FUNC{\func{\psi_{p}}{\zeta}}{r} \Act{\gamma}{r}{\FUNC{\func{\psi_p}{\eta}^{\ast}}{r^{- 1} s}}}{r} \\
& = \Int{G}
        {
        \func{j_{\Y}}{\func{\zeta}{r}} \Act{\gamma}{r}{\func{\Delta}{s^{- 1} r} \cdot
        \Act{\gamma}{r^{- 1} s}{\func{j_{\Y}}{\func{\eta}{s^{- 1} r}}^{\ast}}}
        }
        {r} \\
& = \Int{G}{\func{\Delta}{s^{- 1} r} \cdot \func{j_{\Y}}{\func{\zeta}{r}} \Act{\gamma}{s}{\func{j_{\Y}}{\func{\eta}{s^{- 1} r}}^{\ast}}}{r} \\
& = \Int{G}{\func{\Delta}{s^{- 1} r} \cdot \func{j_{\Y}}{\func{\zeta}{r}} \Act{\gamma}{s}{\func{j_{\Y}}{\func{\eta}{s^{- 1} r}}}^{\ast}}{r} \\
& = \Int{G}{\func{\Delta}{s^{- 1} r} \cdot \func{j_{\Y}}{\func{\zeta}{r}} \func{j_{\Y}}{\Act{\beta^{p}}{s}{\func{\eta}{s^{- 1} r}}}^{\ast}}{r}
    \\
& = \Int{G}{\func{\Delta}{s^{- 1} r} \cdot \func{j_{\Y}^{\Br{p}}}{\Theta_{\func{\zeta}{r},\Act{\beta^{p}}{s}{\func{\eta}{s^{- 1} r}}}}}{r} \\
& = \func{j_{\Y}^{\Br{p}}}{\Int{G}{\func{\Delta}{s^{- 1} r} \cdot \Theta_{\func{\zeta}{r},\Act{\beta^{p}}{s}{\func{\eta}{s^{- 1} r}}}}{r}} \\
& = \func{j_{\Y}^{\Br{p}}}{\FUNC{\func{\Lambda}{\Theta_{\zeta,\eta}}}{s}} \\
& = \FUNC{j_{\Y}^{\Br{p}} \circ \func{\Lambda}{\Theta_{\zeta,\eta}}}{s}.
\end{align*}
Hence, $ \func{\psi^{\Br{p}}}{\Theta_{\zeta,\eta}} = j_{\Y}^{\Br{p}} \circ \func{\Lambda}{\Theta_{\zeta,\eta}} $, which means that $ \func{\psi^{\Br{p}}}{T} = j_{\Y}^{\Br{p}} \circ \func{\Lambda}{T} $ for all $ T \in \Comp{\Y \rtimes_{\beta^{p}} G} $. In particular, we have for all $ f \in \Cc{G,A} $ that
\begin{align*}
    \func{\psi^{\Br{p}}}{\func{\overline{\phi_{p}}}{f}}
& = j_{\Y}^{\Br{p}} \circ \func{\Lambda}{\func{\overline{\phi_{p}}}{f}} \\
& = j_{\Y}^{\Br{p}} \circ \func{\Lambda}{\func{\Lambda^{- 1}}{\phi_{p} \circ f}} \\
& = j_{\Y}^{\Br{p}} \circ \phi_{p} \circ f \\
& = j_{\Y} \circ f \\
& = \func{\psi_{e}}{f}.
\end{align*}
Therefore, $ \psi^{\Br{p}} \circ \overline{\phi_{p}} = \psi_{e} $ for all $ p \in P $, which proves that $ \psi $ is Cuntz-Pimsner covariant. By universality, $ \psi $ determines a unique $ \ast $-homomorphism
$$
\psi_{\ast}: \O{\Y \rtimes_{\beta} G} \to \O{\Y} \rtimes_{\gamma} G.
$$

The image of $ \psi_{\ast} $ generates $ \O{\Y} \rtimes_{\gamma} G $, so $ \psi_{\ast} $ is surjective. The injectivity of $ \psi_{\ast} $ follows from the injectivity of $ \psi_{e} $ and the existence of a gauge action of $ \T^{k} $ on $ \O{\Y} \rtimes_{\gamma} G $; see Lemma 3.3.2 in \cite{DKPS} or Corollary 4.14 in \cite{CLSV}.
\end{proof}

\begin{Cor}
Suppose $P=\mathbb N^k$. If $ A $ is AF and each $ \Cstar $-correspondence $ \Y_{n} $ is full and separable, then $ \O{\Y} \rtimes_{\gamma} \T^{k} $ is AF.
\end{Cor}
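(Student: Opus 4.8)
The strategy is to identify the fixed-point algebra of the gauge action, prove it is AF, and then transfer this to the crossed product by exploiting that the action of the compact group $\T^{k}$ is saturated.

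Recall that $\gamma$ is the $\T^{k}$-gauge action (so $\beta^{p}_{z}=z^{p}\Id_{\Y_{p}}$ and $\alpha=\Id_{A}$), and that for $\Y$ (row-finite and faithful, as throughout this section) the fixed-point algebra was identified as
$$
\O{\Y}^{\gamma}\cong\varinjlim_{p\in\N^{k}}\Comp{\Y_{p}},
$$
with connecting maps $T\mapsto T\otimes\Id_{\Y_{q}}$. First I would show that $\O{\Y}^{\gamma}$ is AF. Since each fiber $\Y_{p}$ is full, it is a $\Comp{\Y_{p}}$--$A$ imprimitivity bimodule, so $\Comp{\Y_{p}}$ is Morita equivalent to $A$; as $A$ and $\Y_{p}$ are separable, the Brown--Green--Rieffel theorem yields a stable isomorphism $\Comp{\Y_{p}}\otimes\Comp{\ell^{2}\Br{\N}}\cong A\otimes\Comp{\ell^{2}\Br{\N}}$. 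Because $A$ is AF this stabilization is AF, hence so is its corner $\Comp{\Y_{p}}$. An inductive limit of AF algebras is AF, so $\O{\Y}^{\gamma}$ is AF.

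Next I would relate $\O{\Y}\rtimes_{\gamma}\T^{k}$ to $\O{\Y}^{\gamma}$ through the $\Z^{k}$-grading of $\O{\Y}$ by the gauge spectral subspaces $\O{\Y}_{n}=\Set{x}{\O{\Y}}{\Act{\gamma}{z}{x}=z^{n}x}$, whose degree-zero part is $\O{\Y}^{\gamma}$. The crucial point is that this compact-group action is \emph{saturated}, i.e.\ $\overline{\Span{\O{\Y}_{n}\,\O{\Y}_{n}^{\ast}}}=\O{\Y}^{\gamma}$ for every $n\in\Z^{k}$. Granting saturation, Rieffel's imprimitivity theorem turns $\O{\Y}$ into an $\Br{\O{\Y}\rtimes_{\gamma}\T^{k}}$--$\O{\Y}^{\gamma}$ imprimitivity bimodule, so $\O{\Y}\rtimes_{\gamma}\T^{k}$ is Morita equivalent to $\O{\Y}^{\gamma}$. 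All algebras in sight are separable, so Brown--Green--Rieffel again gives a stable isomorphism with $\O{\Y}^{\gamma}\otimes\Comp{\ell^{2}\Br{\N}}$, which is AF; since AF-ness passes to stabilizations and to corners, $\O{\Y}\rtimes_{\gamma}\T^{k}$ is AF.

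I expect the verification of saturation to be the main obstacle, and it is exactly here that fullness is indispensable. It suffices to check $n=\pm e_{i}$ and then propagate via $\overline{\Span{\O{\Y}_{n}\O{\Y}_{n'}}}=\O{\Y}_{n+n'}$. For $n=e_{i}$ I would take a generator $\func{j_{\Y}}{x}\func{j_{\Y}}{y}^{\ast}$ of the level-$p$ subalgebra $\Comp{\Y_{p}}\subseteq\O{\Y}^{\gamma}$ with $p\geq e_{i}$ and choose finite families $\{u\}\subseteq\Y_{p-e_{i}}$ for which $\sum_{u}\Inner{u}{u}{\Y_{p-e_{i}}}$ is an approximate unit for $A$ (possible precisely because $\Y_{p-e_{i}}$ is full). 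Then
$$
\sum_{u}\Br{\func{j_{\Y}}{x}\func{j_{\Y}}{u}^{\ast}}\Br{\func{j_{\Y}}{y}\func{j_{\Y}}{u}^{\ast}}^{\ast}=\func{j_{\Y}}{x\,\textstyle\sum_{u}\Inner{u}{u}{\Y_{p-e_{i}}}}\func{j_{\Y}}{y}^{\ast}\longrightarrow\func{j_{\Y}}{x}\func{j_{\Y}}{y}^{\ast},
$$
and each summand lies in $\O{\Y}_{e_{i}}\O{\Y}_{e_{i}}^{\ast}$ because $\func{j_{\Y}}{x}\func{j_{\Y}}{u}^{\ast}$ and $\func{j_{\Y}}{y}\func{j_{\Y}}{u}^{\ast}$ have gauge degree $e_{i}$; the connecting inclusions $\Comp{\Y_{p}}\subseteq\Comp{\Y_{p+e_{i}}}$ reduce the remaining case $p_{i}=0$ to $p\geq e_{i}$, finishing the proof.
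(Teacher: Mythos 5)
Your proposal is correct and takes essentially the same route as the paper: the paper's one-line proof likewise rests on the Morita--Rieffel equivalence of $ \O{\Y} \rtimes_{\gamma} \T^{k} $ with the core $ \O{\Y}^{\gamma} \cong \varinjlim_{n \in \N^{k}} \Comp{\Y_{n}} $, together with the Morita--Rieffel equivalence of each $ \Comp{\Y_{n}} $ with $ A $ coming from fullness. The only difference is that you supply the details the paper simply recalls as known --- the saturation of the gauge action (your approximate-unit computation is the right verification, and fullness enters exactly where you say it does) and the passage from Morita equivalence to AF-ness via separability and Brown--Green--Rieffel.
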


\begin{proof}
Recall that in this case $ \O{\Y} \rtimes_{\gamma} \T^{k} $ is Morita-Rieffel equivalent with the core $ \ds \O{Y}^{\gamma} \cong \varinjlim_{n \in \N^{k}} \Comp{\Y_{n}} $ and that each $ \Comp{\Y_{n}} $ is Morita-Rieffel equivalent to $ A $.
\end{proof}

\begin{rmk}
Katsoulis obtained similar results in \cite{Ka} for the so-called generalized gauge action on a product system over a semigroup $P$ which is the positive cone of an abelian group. Moreover, using a Fourier transform, he proves a Takai duality result and generalizes some results of Schafhauser from \cite{Sch}.
\end{rmk}

\begin{Eg}
Let $ G $ be a compact  group and let $ \rho_{1},\ldots,\rho_{k} $ be finite-dimensional representations of $ G $ on Hilbert spaces $\mathcal H_1,...,\mathcal H_k$. Let $\Y$ be the product system with fibers $\Y_n=\mathcal H^n=\mathcal H_1^{\otimes n_1}\otimes\cdots\otimes  \mathcal H_k^{\otimes n_k}$ for $n=(n_1,...,n_k)\in \N^k$, in particular, $A=\Y_0=\mathbb C$. 

The compact group $G$ acts on each fiber $\Y_n$ of the  product system $\Y$ via the representation $\rho^n=\rho_1^{\otimes n_1}\otimes\cdots\otimes \rho_k^{\otimes n_k}$. This action is compatible with the multiplication maps and commutes with the gauge action of $\T^k$.  The crossed product $\Y \rtimes G$ becomes a row-finite and faithful product system indexed by $\mathbb N^k$ over the group $C^*$-algebra $C^*(G)$. Moreover, 
\[\O{\Y} \rtimes G \cong \O{\Y \rtimes G}.\]

The Doplicher-Roberts algebra $\mathcal O_{\rho_1,...,\rho_k}$ constructed in \cite{D} from intertwiners Hom$(\rho^n,\rho^m)$ is isomorphic to the fixed point algebra $\O{\Y}^G$ and is  Morita-Rieffel equivalent to $\O{\Y}\rtimes G$.

\end{Eg}

\begin{Eg}
If a locally compact group $G$ acts on a $k$-graph $\Lambda$ by automorphisms, then $G$ acts on the product system $\Y$ constructed from $\Lambda$ as in Example \ref{k-graph} and the $C^*$-algebra of the product system $\Y\rtimes G$ is isomorphic to $C^*(\Lambda)\rtimes G$. In \cite{FPS} the authors consider the particular case when $G=\mathbb Z^\ell$ and they construct a $(k+\ell)$-graph $\Lambda\times \mathbb Z^\ell$ such that $C^*(\Lambda\times \mathbb Z^\ell)\cong C^*(\Lambda)\rtimes\mathbb Z^\ell$. Our result gives a new perspective of this situation.
\end{Eg}




\end{document}